\documentclass[10pt]{article}
\usepackage{graphicx,caption}
\usepackage{latexsym,booktabs}
\usepackage{blindtext,textcomp}
\usepackage{amsthm, amsmath, amssymb, amsbsy}
\usepackage{hyperref}
\usepackage{tikz}
\usepackage[utf8]{inputenc}
\usepackage{sectsty}
\sectionfont{\centering}
\usepackage{enumitem,mathrsfs}
\usepackage{comment}
\usepackage[colorinlistoftodos]{todonotes}

\voffset=-1cm
\oddsidemargin -.15in
\evensidemargin -.15in
\textheight=630pt
\textwidth=450pt

\newtheorem{theorem}{Theorem}[section]
\newtheorem{remark}[theorem]{Remark}
\newtheorem{cor}[theorem]{Corollary}

\newtheorem{lem}[theorem]{Lemma}
\newtheorem{defn}[theorem]{Definition}


\def \N {{\mathbb N}}

\def \R {{\mathbb R}}

  \author{Veer Singh Panwar\thanks{Email: vs728@snu.edu.in} \,\,and A. Satyanarayana Reddy\thanks{Email: satya.a@snu.edu.in}\\
Department of Mathematics\\Shiv Nadar University, Dadri\\ U.P. 201314, India.}
  \date{}
  
\begin{document}
	\title{\textbf{Positivity of Hadamard powers of a few band matrices}}
		\maketitle
 \begin{center}
 \large{\textbf{Abstract}}
 \end{center}
	\begin{footnotesize}
   Let $\mathbb{P}_G([0,\infty))$ and $\mathbb{P}_G^{'}([0,\infty))$ be the sets of positive semidefinite and positive definite matrices of order $n$, respectively, with nonnegative entries, where some positions of zero entries are restricted by a simple graph $G$ with $n$ vertices. It is proved that for a connected simple graph $G$ of order $n\geq 3$, the set of powers preserving positive semidefiniteness on $\mathbb{P}_G([0,\infty))$ is precisely the same as the set of powers preserving positive definiteness on $\mathbb{P}_G^{'}([0,\infty))$. In particular, this provides an explicit combinatorial description of the critical exponent for positive definiteness, for all chordal graphs.
   Using chain sequences, it is proved that the Hadamard powers preserving the positive (semi) definiteness of every tridiagonal matrix with nonnegative entries are precisely $r\geq 1$. The infinite divisibility of tridiagonal matrices is studied. The same results are proved for a special family of pentadiagonal matrices.
	\end{footnotesize}

\vspace{0.5 cm}		
\textit{AMS classification: } {15B48, 47B36, 15B33.}
	
	\begin{footnotesize}
		\textbf{Keywords :} Infinitely divisible matrices,  Tridiagonal matrices, Hadamard powers, Pentadiagonal matrices, Chain sequences, Graphs.
	\end{footnotesize}
		
\begin{center}
\section{Introduction}\label{sec:intro}    
\end{center}

Throughout this paper, every matrix has real entries. A matrix is called \textit{nonnegative} if all its entries are nonnegative.  A matrix $A$ is called
{\em positive  semidefinite} (PSD) (respectively {\em positive definite} (PD)) if $A$ is symmetric and $\langle x,Ax \rangle \geq 0$ for all $x \in \R^n$ (respectively $\langle x,Ax \rangle > 0$ for all $x \in \R^n\setminus\{0\}$). If $r > 0$, then we denote the $r$th \textit{Hadamard power} of a nonnegative matrix $A = [a_{ij}]$ by $A^{or}$ (or $(A)^{\circ r})$, where $A^{or} = [a_{ij}^r]$. A lot of interest has been shown in studying the real entrywise powers preserving the positive semidefiniteness of various families of matrices, see \cite{bhatia2007positivity,fitzgerald1977fractional,guillot2015complete,guillot2016critical,hiai2009monotonicity,horn1969theory,jain2017hadamard,jain2020hadamard}. A well-known result is that if $A$ is a nonnegative PSD matrix of order $n$ and $r \geq n-2$, then $A^{\circ r}$ is PSD. Moreover, for every positive noninteger $r<n-2,$ there exists a positive semidefinite matrix $A$ such that $A^{\circ r}$ is not positive semidefinite (see \cite[Theorem 2.2]{fitzgerald1977fractional}). \par

Let $\mathcal{I} \subseteq \R$. A function $f$ defined on $\mathcal{I}$ is called \textit{superadditive} on $\mathcal{I}$ if $f(a+b) \geq f(a)+f(b)$ for all $a,b \in \mathcal{I}$. Let $G=(V,E)$ be a simple graph with vertex set $V=\{1,\ldots, n\}$ such that $n\geq 3$. Let $\mathbb{P}_n(\mathcal{I})$ and $\mathbb{P}_n^{'}(\mathcal{I})$, respectively, be the sets of all positive semidefinite and positive definite matrices of order $n$ with entries in $\mathcal{I}$. Let 
$$\mathbb{P}_G(\mathcal{I}) = \{A=[a_{ij}] \in \mathbb{P}_n(\mathcal{I}): a_{ij}=0 \,\, \text{for all}\,\, i\neq j, (i,j) \notin E \},$$
$$\mathbb{P}_G^{'}(\mathcal{I}) = \{A=[a_{ij}] \in \mathbb{P}_n^{'}(\mathcal{I}): a_{ij}=0 \,\, \text{for all}\,\, i\neq j, (i,j) \notin E \},$$
$$\mathcal{H}_G = \{r \in \R: A^{\circ r} \in \mathbb{P}_G(\R) \,\,\textup{for all}\,\, A \in \mathbb{P}_G([0,\infty))\},$$
$$\mathcal{H}_G^{'} = \{r \in \R: A^{\circ r} \in \mathbb{P}_G^{'}(\R) \,\,\textup{for all}\,\, A \in \mathbb{P}_G^{'}([0,\infty))\}.$$
Let $H$ be an induced subgraph of the graph $G$. Then $\mathcal{H}_G \subseteq \mathcal{H}_H$. \par
Our first result is as follows:

\begin{theorem} \label{thm: the two sets}
Let $G$ be any connected simple graph with at least $3$ vertices. Then $\mathcal{H}_G = \mathcal{H}_G^{'}$.
\end{theorem}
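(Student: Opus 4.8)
The plan is to establish the two inclusions $\mathcal{H}_G^{'} \subseteq \mathcal{H}_G$ and $\mathcal{H}_G \subseteq \mathcal{H}_G^{'}$ separately. First note that both sets lie in $(0,\infty)$: by the convention fixed above, $A^{\circ r}$ is only defined for $r>0$, so any $r$ belonging to either set automatically satisfies $r>0$. Throughout I will use two elementary facts about $t\mapsto t^r$ on $[0,\infty)$ for $r>0$: it is continuous (in particular at $0$) and strictly increasing. The latter implies that a positive Hadamard power preserves both the zero and the nonzero off-diagonal positions of a nonnegative matrix, so the graph constraint defining $\mathbb{P}_G$ (resp.\ $\mathbb{P}_G^{'}$) is inherited by $A^{\circ r}$ whenever $A$ has it; I will use this silently below.

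For $\mathcal{H}_G^{'} \subseteq \mathcal{H}_G$, fix $r \in \mathcal{H}_G^{'}$ and $A \in \mathbb{P}_G([0,\infty))$. For every $\epsilon > 0$ the matrix $A + \epsilon I$ is nonnegative, positive definite, and has the same off-diagonal zero pattern as $A$, so $A + \epsilon I \in \mathbb{P}_G^{'}([0,\infty))$ and hence $(A + \epsilon I)^{\circ r} \in \mathbb{P}_G^{'}(\R)$ is positive definite. Letting $\epsilon \to 0^+$, continuity of $t\mapsto t^r$ gives $(A + \epsilon I)^{\circ r} \to A^{\circ r}$ entrywise, and since the cone of positive semidefinite matrices is closed, $A^{\circ r}$ is positive semidefinite; it also has the zero pattern of $G$, so $A^{\circ r} \in \mathbb{P}_G(\R)$, and therefore $r \in \mathcal{H}_G$.

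For the reverse inclusion $\mathcal{H}_G \subseteq \mathcal{H}_G^{'}$, fix $r \in \mathcal{H}_G$ and $A = [a_{ij}] \in \mathbb{P}_G^{'}([0,\infty))$. Since $A$ is positive definite, $\lambda_{\min}(A) > 0$, and because $a_{ii} = \langle e_i, A e_i\rangle \geq \lambda_{\min}(A)$ for each $i$, any $t$ with $0 < t < \lambda_{\min}(A)$ makes $A - tI$ nonnegative (its diagonal entries $a_{ii} - t$ are positive and its off-diagonal entries are unchanged), positive semidefinite, and of the same zero pattern; hence $A - tI \in \mathbb{P}_G([0,\infty))$ and $(A - tI)^{\circ r} \in \mathbb{P}_G(\R)$ is positive semidefinite. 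Now $A^{\circ r} - (A - tI)^{\circ r}$ is the diagonal matrix with entries $a_{ii}^r - (a_{ii} - t)^r$, which are strictly positive since $0 < a_{ii} - t < a_{ii}$ and $t\mapsto t^r$ is strictly increasing. Thus $A^{\circ r}$ is the sum of a positive semidefinite matrix and a positive definite diagonal matrix, hence positive definite, and it has the zero pattern of $G$, so $A^{\circ r} \in \mathbb{P}_G^{'}(\R)$ and $r \in \mathcal{H}_G^{'}$.

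The argument is short, and I expect the only points needing care to be bookkeeping: checking at each step that the perturbed matrices $A + \epsilon I$ and $A - tI$ remain inside the relevant class (nonnegativity, the exact off-diagonal zero pattern, positive (semi)definiteness), and invoking closedness of the PSD cone in the limit. The one genuinely substantive observation is that subtracting a sufficiently small multiple of $I$ from a nonnegative positive definite matrix keeps it in $\mathbb{P}_G([0,\infty))$ while producing a strictly positive diagonal defect in the Hadamard power; this is exactly what upgrades ``positive semidefinite'' to ``positive definite'' and closes the argument. Note that neither connectedness nor $n \geq 3$ is actually used in this proof; those hypotheses come from the ambient setting of the paper.
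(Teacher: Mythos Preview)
Your proof is correct. The first inclusion $\mathcal{H}_G^{'}\subseteq\mathcal{H}_G$ is exactly the paper's argument (perturb by $\epsilon I$ and pass to the limit). For the reverse inclusion the two proofs share the same skeleton---subtract a multiple of $I$, take the Hadamard power, and add back a positive diagonal---but differ in one detail. The paper subtracts the full smallest eigenvalue $\lambda=\lambda_{\min}(A)$, so $B=A-\lambda I$ may have zero diagonal entries; to ensure $(b_{ii}+\lambda)^r-b_{ii}^r>0$ it then invokes superadditivity of $x\mapsto x^r$, which needs $r\geq 1$ and hence the preliminary fact $\mathcal{H}_G\subseteq[1,\infty)$ (this is where connectedness and $n\geq 3$ enter). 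You instead subtract $tI$ with $0<t<\lambda_{\min}(A)$, keeping all diagonal entries strictly positive, so mere strict monotonicity of $x\mapsto x^r$ for $r>0$ gives $a_{ii}^r-(a_{ii}-t)^r>0$. This is a cleaner choice: it avoids the superadditivity lemma and, as you note, shows the equality $\mathcal{H}_G=\mathcal{H}_G^{'}$ without any hypothesis on $G$ beyond the Hadamard power being defined.
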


A matrix $A=[a_{ij}]$ is called a \emph{band matrix} of bandwidth $d$ if $a_{ij}=0$ for $|i-j|>d$. The band matrices of bandwidth $1$ (respectively $2$) are also called \emph{tridiagonal} (respectively \emph{pentadiagonal}). 
Let the symmetric nonnegative band matrices $T$ and $P$ be defined as follows:

\begin{align} \label{eqn: T and P}
T= \begin{bmatrix}
a_{1}&b_{1}&&&&\\
b_{1}&a_{2}&b_{2}&&&\\
&b_{2}&\ddots&\ddots&&\\
&&\ddots&\ddots&\ddots&\\
&&&\ddots&\ddots&b_{n-1}\\
&&&&b_{n-1}&a_{n}
\end{bmatrix} \,\, \text{and}
\,\, P = \begin{bmatrix}
x_{1}& 0 &y_{1}& & & & & \\
0 &x_{2}& 0 &y_{2}&& &  &\\
y_{1}& 0 &\ddots&\ddots&\ddots& &\\
&y_2&\ddots&\ddots&\ddots&\ddots&\\
&&\ddots&\ddots&\ddots&\ddots&\ddots\\
&&&\ddots&\ddots&\ddots&\ddots&y_{n-2}\\
&&&&\ddots&\ddots&\ddots& 0 \\
& & & & &y_{n-2}& 0 &x_{n}
\end{bmatrix},
\end{align}

\noindent where $n\geq 3$, $a_{i}$, $b_{j}$, $x_i$ and $y_k \geq 0$ for $1 \leq i \leq n$, $1 \leq j \leq (n-1)$, $1 \leq k \leq (n-2)$. \par

A sequence $\{\mathfrak{a}_{k}\}_{k>0}$ is called a \textit{chain sequence} if there exists a parameter sequence $\{\mathfrak{g}_{k}\}_{k \geq 0}$ such that $0 \leq \mathfrak{g}_{0}<1$ and $0 < \mathfrak{g}_{k}<1$ for $k\geq1$ and $\mathfrak{a}_{k} = (1-\mathfrak{g}_{k-1})\mathfrak{g}_{k}$ for $k \geq 1$ (see \cite[p. 91]{chihara2011introduction}). A basic example of a chain sequence is the constant sequence $\big\{\frac{1}{4}\big\}_{k \geq 1}$ with the parameter sequence $\big\{\frac{1}{2}\big\}_{k \geq 1}$. For more information and examples on chain sequences, see \cite{chihara2011introduction,ismail1991discrete,wall2018analytic}. \par
A graph $G =(V,E),$ where $V = \{1, \ldots, n\},$ is called a \textit{band graph} of bandwidth $d$ if $\{i,j\}\in E$ if and only if $i \neq j$ and $|i-j|\leq d$.  Let $G$ be a band graph of bandwidth $1$ with $n\geq 3$, i.e., a path graph. Then $\mathbb{P}_G([0,\infty))$ is precisely the set of all PSD nonnegative tridiagonal matrices of order $n$. By Theorem $1.4$ in \cite{guillot2016critical}, $\mathcal{H}_{G}=[1,\infty)$. Hence, the Hadamard powers preserving the positive semidefiniteness of all the nonnegative tridiagonal matrices of order $n\geq 3$ are precisely $r\geq 1$. 
We give an alternative proof for this in our next theorem using chain sequences.


\begin{theorem} \label{Thm : sub main result}
The matrix $T^{\circ r}$ is \textup{PD (PSD)} for every \textup{PD (PSD)} matrix $T$ in (\ref{eqn: T and P}) if and only if $r\geq1$.
\end{theorem}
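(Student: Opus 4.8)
The plan is to prove both directions separately, with the "if" direction being the easier one. For the "if" direction ($r \geq 1 \Rightarrow T^{\circ r}$ is PSD whenever $T$ is), I would invoke Theorem 1.2: since the underlying graph of a tridiagonal matrix is the path graph $P_n$, which is connected on $n \geq 3$ vertices, $\mathcal{H}_{P_n} = \mathcal{H}_{P_n}'$. Alternatively—and this is presumably the point of giving a "chain sequence" proof—I would argue directly. For PSD: by continuity it suffices to treat the PD case (perturb $a_i \mapsto a_i + \varepsilon$), or better, reduce to the case where all $b_j > 0$ (otherwise $T$ decomposes as a direct sum of smaller tridiagonal blocks, and we induct on $n$). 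So assume $T$ is PD with all $b_j > 0$. Write $a_i = d_i^2$ (with $d_i > 0$ since PD forces $a_i > 0$) and $b_j = d_j d_{j+1} t_j$ for scalars $t_j \in (0,1)$; positive definiteness of $T$ is then equivalent, after conjugating by $\mathrm{diag}(d_1, \ldots, d_n)$ and normalizing, to the statement that the sequence $\{t_j^2\}$ (suitably shifted/indexed) is a chain sequence — this is the classical criterion that a tridiagonal matrix with unit diagonal and off-diagonal entries $t_j$ is positive (semi)definite iff $\{t_j^2\}$ is a chain sequence (cf. Chihara, and Wall's continued-fraction theory). Then $T^{\circ r}$ corresponds, after the same diagonal conjugation by $\mathrm{diag}(d_1^r, \ldots, d_n^r)$, to off-diagonal entries $t_j^r$, and we must show $\{t_j^{2r}\}$ is a chain sequence whenever $\{t_j^2\}$ is. Since $0 < t_j^2 < 1$ and $r \geq 1$ gives $t_j^{2r} \leq t_j^2$, this follows from the comparison principle for chain sequences: if $\{\mathfrak{a}_k\}$ is a chain sequence and $0 \leq \mathfrak{c}_k \leq \mathfrak{a}_k$ for all $k$, then $\{\mathfrak{c}_k\}$ is a chain sequence (Chihara, p. 96–97). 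That closes the "if" direction.

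For the "only if" direction ($T^{\circ r}$ PSD for all such $T$ $\Rightarrow r \geq 1$), I would exhibit an explicit $3 \times 3$ tridiagonal PSD nonnegative matrix $T$ for which $T^{\circ r}$ fails to be PSD for every $r \in (0,1)$. The natural candidate is a matrix realizing equality in the chain-sequence condition, e.g.
\[
T = \begin{bmatrix} 1 & 1 & 0 \\ 1 & 2 & 1 \\ 0 & 1 & 1 \end{bmatrix},
\]
which has $\det T = 0$ and is PSD (the maximal chain sequence $\{\tfrac14,\tfrac14,\ldots\}$ story, truncated). Then compute $\det(T^{\circ r}) = 2^r - 2$, which is negative for all $0 < r < 1$ while the leading $2\times 2$ minor $2^r - 1$ stays positive — so $T^{\circ r}$ is not PSD. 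For general (non-integer) $r < 1$ this single example suffices; to also rule out... actually $r < 1$ is the only constraint, so one example handles it, and for larger $n$ one pads with an identity block. I would also note that $r = 1$ trivially works and $r$ negative or zero is excluded since Hadamard powers are only defined for $r > 0$ here (and $r=0$ gives the all-ones matrix restricted to the band, which is not PSD for $n \geq 3$ on the path).

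The main obstacle I anticipate is making the dictionary between "tridiagonal PD/PSD" and "chain sequence" fully precise and correctly indexed — in particular handling the boundary terms $a_1, a_n$ and the degenerate cases (some $b_j = 0$, or $a_i = 0$ forcing the adjacent $b$'s to vanish) so that the reduction to the all-positive, PD case is airtight. The chain-sequence comparison lemma itself is standard, so once the translation is set up correctly the argument is short; the care is entirely in the setup and in the PSD-versus-PD continuity/limiting argument (chain sequences form a closed class under the relevant limits, which is exactly what makes $\mathcal{H}_G = \mathcal{H}_G'$ consistent with the statement here).
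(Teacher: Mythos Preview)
Your proposal is correct and follows essentially the same route as the paper: for the ``if'' direction you use the chain-sequence characterization of positive definite tridiagonal matrices (the paper cites it as Ismail--Muldoon, with the sequence $\{b_j^2/(a_j a_{j+1})\}$, which is exactly your $\{t_j^2\}$ after diagonal scaling) together with the comparison principle for chain sequences, and for the ``only if'' direction you use the same $3\times 3$ example. The only cosmetic difference is that your counterexample $T$ is PSD with $\det T = 0$, whereas the paper takes the PD perturbation $A(\epsilon)$ with middle entry $2+\epsilon$ and observes $\det(A(\epsilon)^{\circ r}) = (2+\epsilon)^r - 2 < 0$ for $\epsilon$ small; since you already invoke $\mathcal{H}_{P_n} = \mathcal{H}_{P_n}'$, either version settles both the PD and PSD cases.
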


Similarly, for $n\geq 3$, let $G$ be a graph with vertex set $V=\{1,\ldots, n\}$, which is the disjoint union of two path graphs with vertex sets $V_1=\{1,3,\ldots, p\}$ and $V_2=\{2,4,\ldots, q\}$, where $p = n-1$, $q=n$ if $n$ is even, and $p=n$, $q= n-1$ if $n$ is odd. Then $\mathbb{P}_G([0,\infty))$ is precisely the set of all PSD nonnegative pentadiagonal matrices as in Equation (\ref{eqn: T and P}). Hence, by Theorem~$1.4$ of \cite{guillot2016critical}, $\mathcal{H}_G=[0,\infty)$ for $n=3,4$, and $\mathcal{H}_G=[1,\infty)$ for $n\geq 5$. We prove the latter result alternatively in our next Theorem.

\begin{theorem} \label{Penta results}
 The matrix $P^{\circ r}$ is \textup{PD (PSD)} for every \textup{PD (PSD)} matrix $P$ in (\ref{eqn: T and P}) of order $n\geq 5$ if and only if $r\geq1$.
\end{theorem}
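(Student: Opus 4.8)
The plan is to reduce Theorem~\ref{Penta results} to Theorem~\ref{Thm : sub main result} by exploiting the parity pattern of the nonzero entries of $P$.

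First I would observe that in the matrix $P$ of (\ref{eqn: T and P}) the $(i,j)$ entry can be nonzero only when $i$ and $j$ have the same parity, since $P_{ij}=0$ unless $i=j$ or $|i-j|=2$. Consequently, conjugating $P$ by the permutation that lists the odd indices $1,3,5,\dots$ first and the even indices $2,4,6,\dots$ afterwards produces a block-diagonal matrix $T_1\oplus T_2$, where $T_1$ is the $\lceil n/2\rceil\times\lceil n/2\rceil$ nonnegative tridiagonal matrix with diagonal entries $x_1,x_3,x_5,\dots$ and off-diagonal entries $y_1,y_3,\dots$, and $T_2$ is the $\lfloor n/2\rfloor\times\lfloor n/2\rfloor$ nonnegative tridiagonal matrix with diagonal entries $x_2,x_4,\dots$ and off-diagonal entries $y_2,y_4,\dots$. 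Since the Hadamard power $(\cdot)^{\circ r}$ and the property of being PD (PSD) are both invariant under simultaneous row/column permutations and compatible with direct sums, $P^{\circ r}$ is permutation-similar to $T_1^{\circ r}\oplus T_2^{\circ r}$, and $P$ (respectively $P^{\circ r}$) is PD (PSD) if and only if both of $T_1,T_2$ (respectively $T_1^{\circ r},T_2^{\circ r}$) are PD (PSD). Conversely, every pair of nonnegative tridiagonal matrices of these two orders is realized by some such $P$.

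For the ``if'' direction I would assume $r\ge 1$ and take a PD (PSD) matrix $P$ as in (\ref{eqn: T and P}). Any block among $T_1,T_2$ of order at least $3$ falls under Theorem~\ref{Thm : sub main result}, so its $r$th Hadamard power is again PD (PSD); a block of order $2$ (this happens only when $n=5$) has the form $\left[\begin{smallmatrix} a & b\\ b & c\end{smallmatrix}\right]$ with $a,c,b\ge 0$ and $ac\ge b^2$ (strictly, in the PD case), whence $a^r,c^r\ge 0$ and $a^rc^r=(ac)^r\ge (b^2)^r=(b^r)^2$, so its Hadamard power is PD (PSD); a block of order $1$ is trivial. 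Thus $T_1^{\circ r}\oplus T_2^{\circ r}$, and hence $P^{\circ r}$, is PD (PSD). For the ``only if'' direction I would use that $n\ge 5$ forces $\lceil n/2\rceil\ge 3$, so the block $T_1$ may be chosen of order at least $3$. Given $r\in(0,1)$, the ``only if'' part of Theorem~\ref{Thm : sub main result} supplies a PD (PSD) nonnegative tridiagonal matrix $S$ of order $\lceil n/2\rceil$ with $S^{\circ r}$ not PSD. Choosing $P$ in (\ref{eqn: T and P}) so that its odd block is $S$ and its even block is the identity (i.e.\ $x_2=x_4=\dots=1$, $y_2=y_4=\dots=0$) yields a PD (PSD) matrix whose $r$th Hadamard power is permutation-similar to $S^{\circ r}\oplus I$, which is not PSD. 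Hence $r\ge 1$ is forced.

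The argument is largely a bookkeeping reduction, so I do not anticipate a serious obstacle: the only mildly delicate points are the order-$2$ block that arises when $n=5$ and the verification that the padded matrix $P$ still has the prescribed pentadiagonal shape, both of which are immediate. All of the real work has already been absorbed into Theorem~\ref{Thm : sub main result}.
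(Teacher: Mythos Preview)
Your proposal is correct and follows essentially the same strategy as the paper: both proofs conjugate $P$ by the odd/even permutation to obtain a direct sum of two tridiagonal matrices and then invoke Theorem~\ref{Thm : sub main result} for the ``if'' direction. The only cosmetic difference is in the ``only if'' direction, where the paper exhibits an explicit $5\times5$ PSD counterexample $\mathcal{P}$ with $\det(\mathcal{P}^{\circ r})=2-3\cdot 2^r+4^r<0$ and then passes to a PD limit, whereas you reuse the counterexample already furnished by Theorem~\ref{Thm : sub main result} inside the odd block; your treatment of the $2\times2$ block when $n=5$ is also slightly more explicit than the paper's.
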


A nonnegative symmetric matrix $A$ is said to be {\em infinitely divisible} (ID) if $A^{or}$ is PSD for every $r > 0$. It is obvious that every ID matrix is PSD; however, the converse need not be true (see \cite{bhatia2006infinitely}). Some basic examples of ID matrices are nonnegative PSD matrices of order $2$ and diagonal matrices with nonnegative diagonal entries. For more examples and results on ID matrices, see~\cite{bhatia2006infinitely,bhatia2007mean,grover2020positivity,horn1969theory}. In our next theorem, we give a characterization for the matrix $T$ to be infinitely divisible.

\begin{theorem}{\label{TID:thm1}}
   The matrix $T$ in (\ref{eqn: T and P}) is \textup{ID} if and only if T is \textup{PSD} and $b_ib_{i+1}=0$ for every $i\in \{1,2,\ldots,n-2\}$. 
\end{theorem}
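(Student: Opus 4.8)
The plan is to prove the two implications separately. The forward direction (infinite divisibility implies the stated condition) will come from exhibiting, whenever two consecutive off-diagonal entries are nonzero, a $3\times 3$ principal submatrix that fails to be ID. The converse will follow from the observation that the hypothesis $b_ib_{i+1}=0$ forces the ``graph of $T$'' to be a matching, so that $T$ is, up to a permutation, a direct sum of blocks of order at most two, each of which is ID.

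For the \emph{only if} direction I would argue by contradiction. Assume $T$ is ID (so in particular PSD), and suppose $b_ib_{i+1}\neq 0$ for some $i\in\{1,\dots,n-2\}$. Consider the $3\times 3$ principal submatrix $M$ of $T$ on rows and columns $i,i+1,i+2$; it has diagonal entries $a_i,a_{i+1},a_{i+2}$ and off-diagonal entries $b_i$ and $b_{i+1}$. It is PSD (a principal submatrix of a PSD matrix), and since $b_i,b_{i+1}>0$ its $2\times 2$ principal minors force $a_i,a_{i+1},a_{i+2}>0$. As $M^{\circ r}$ is a principal submatrix of $T^{\circ r}$, it is PSD for every $r>0$, so $\det(M^{\circ r})\ge 0$. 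Expanding the determinant gives
\[
\frac{\det\bigl(M^{\circ r}\bigr)}{a_i^{\,r}\,a_{i+2}^{\,r}}=a_{i+1}^{\,r}-\Bigl(\tfrac{b_i^{2}}{a_i}\Bigr)^{\!r}-\Bigl(\tfrac{b_{i+1}^{2}}{a_{i+2}}\Bigr)^{\!r},
\]
and, since all three bases are positive, the right-hand side tends to $1-1-1=-1$ as $r\to 0^{+}$. Hence $\det(M^{\circ r})<0$ for all sufficiently small $r>0$, contradicting the positive semidefiniteness of $M^{\circ r}$; therefore $b_ib_{i+1}=0$ for every $i$.

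For the \emph{if} direction, assume $T$ is PSD and $b_ib_{i+1}=0$ for all $i$. Then the set of indices $i$ with $b_i\neq 0$ contains no two consecutive integers, so there is a permutation matrix $Q$ with $QTQ^{T}=B_1\oplus\cdots\oplus B_m$ where each $B_k$ has order $1$ or $2$. Each $B_k$ is a nonnegative PSD matrix (a principal submatrix of $QTQ^{T}$), hence ID: order $1$ is trivial, and for order $2$ the matrix $\bigl[\begin{smallmatrix} a&b\\ b&c\end{smallmatrix}\bigr]^{\circ r}$ has nonnegative diagonal and satisfies $a^{r}c^{r}=(ac)^{r}\ge (b^{2})^{r}=b^{2r}$, so it is PSD (cf.\ Section~\ref{sec:intro}). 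Since Hadamard powers commute with conjugation by a permutation matrix, $QT^{\circ r}Q^{T}=(QTQ^{T})^{\circ r}=B_1^{\circ r}\oplus\cdots\oplus B_m^{\circ r}$, which is PSD for every $r>0$. Hence $T^{\circ r}$ is PSD for all $r>0$, i.e., $T$ is ID.

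I expect the only genuine content to be the $3\times 3$ obstruction in the forward direction — in particular the point that as $r\to 0^{+}$ each of the two ``rank-one'' terms $(b_i^{2}/a_i)^{r}$ and $(b_{i+1}^{2}/a_{i+2})^{r}$ tends to $1$, so together they outweigh $a_{i+1}^{r}\to 1$; the reduction to block-diagonal form and the infinite divisibility of $2\times 2$ nonnegative PSD matrices are routine. The small technical point to keep an eye on is degenerate diagonal entries: in the forward direction $a_i,a_{i+1},a_{i+2}>0$ is guaranteed precisely by $b_i,b_{i+1}>0$ (via the $2\times 2$ minors), and in the converse a zero diagonal entry of a block $B_k$ forces the corresponding off-diagonal entry to vanish as well, so no division by zero occurs.
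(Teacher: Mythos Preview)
Your proof is correct and follows essentially the same approach as the paper: the forward direction isolates a $3\times 3$ principal submatrix and shows its Hadamard $r$th power fails to be PSD as $r\to 0^{+}$ (the paper phrases this via the limit matrix $\lim_{r\to 0^{+}}M^{\circ r}=\bigl[\begin{smallmatrix}1&1&0\\1&1&1\\0&1&1\end{smallmatrix}\bigr]$ rather than your explicit determinant computation, but the content is identical), and the converse reduces to nonnegative PSD blocks of order at most two. One cosmetic point: because $T$ is tridiagonal, the condition $b_ib_{i+1}=0$ already makes $T$ block diagonal in its given ordering, so the permutation $Q$ in your converse can be taken to be the identity.
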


 In Section~\ref{sec: Main results}, we give proofs of the above results, concluding with some related remarks. 
 
 \section{Proofs of the results}\label{sec: Main results}

Let $P_n$ and $K_n$ denote the path graph and the complete graph on $n$ vertices, respectively. Every connected graph $G$ with at least $3$ vertices contains at least a path graph $H= P_3$ or a triangle $H = K_3$ as an induced subgraph. By Theorem $1.4$ in \cite{guillot2016critical}, $\mathcal{H}_{P_3} = \mathcal{H}_{K_3} = [1,\infty)$. Hence, $\mathcal{H}_G \subseteq [1,\infty)$. Let $a,b \geq 0,(a,b)\neq (0,0)$ and $r\geq 1$. Then $$(a+b)^r=\left[\frac{a}{a+b}(a+b)^r+\frac{b}{a+b}(a+b)^r\right]\geq \left[\frac{a}{a+b}(a+b)\right]^r+\left[\frac{b}{a+b}(a+b)\right]^r= a^r+b^r.$$ Hence, the function $f(x)=x^r$ is superadditive on $[0,\infty)$ for $r\geq 1$. We now prove our first result. \\

\hspace{-0.55 cm}\textit{\textbf{Proof of Theorem~\ref{thm: the two sets}}}.
Let $r \in \mathcal{H}_G^{'}$ and $A \in \mathbb{P}_G([0,\infty))$. Let $I$ denote the identity matrix of order $n$. Since $A$ is PSD, there exists a sequence $\{A_{k}\}_{k\geq 1}$ of PD matrices, where the matrices $A_{k} = A+\frac{1}{k}I \in \mathbb{P}_{G}^{'}([0,\infty))$ converges to $A$ entrywise as $k \rightarrow \infty$. Hence, the matrices $A_{k}^{\circ r}$ are PD for $k\geq 1$, so their limit $A^{\circ r}$ is PSD. Hence, $r \in \mathcal{H}_G$. \par
 Conversely, let $r \in \mathcal{H}_G,$ then $r \geq 1$. Let $A \in \mathbb{P}_G^{'}([0,\infty))$. Let $B= A-\lambda I$, where $\lambda>0$ is the smallest eigenvalue of $A$. Then $B \in \mathbb{P}_G([0,\infty)),$ so $B^{\circ r}$ is PSD. 
 
 $$((B+\lambda I)^{\circ r}-B^{\circ r})_{ij} = \begin{cases}
 (b_{ii}+\lambda)^r-b_{ii}^r & \text{if}\,\, i=j,\\
 0 & \text{otherwise}.
 \end{cases}$$ 
 Since the function $f(x) = x^r$ is superadditive on $[0,\infty)$, we have $(b_{ii}+\lambda)^r-b_{ii}^r\geq\lambda^{r}>0$. Hence, the diagonal matrix $(B+\lambda I)^{\circ r}-B^{\circ r}$ is PD. Therefore, $A^{\circ r} = ((B+\lambda I)^{\circ r}-B^{\circ r})+ B^{\circ r}$ is PD, which gives $r \in \mathcal{H}_G^{'}$. This completes the proof. \qed

\begin{defn}
A graph $G$ is called a \textit{chordal graph} if every cycle of four or more vertices in it has a chord.
\end{defn}

Let $K_n^{(1)}$ denote the complete graph on $n$ vertices with one edge missing. By Theorem $1.4$ in \cite{guillot2016critical} and Theorem~\ref{thm: the two sets}, we have a combinatorial characterization of the critical exponent for any
chordal graph:  

\begin{cor}
Let $G$ be any chordal graph with at least $3$ vertices and $r$ be the largest integer such that either $K_r$ or $K_r^{(1)}$ is a subgraph of $G$. Then $\mathcal{H}_G^{'} = \N \cup [r-2,\infty)$.
\end{cor}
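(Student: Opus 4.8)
The plan is to prove the corollary by combining the known critical-exponent result from \cite{guillot2016critical} with Theorem~\ref{thm: the two sets}, then identifying the quantity $r$ appearing there with the largest clique-or-nearly-clique parameter for chordal $G$. First I would recall the precise statement of \cite[Theorem~1.4]{guillot2016critical}: for a connected graph $G$, the set $\mathcal{H}_G$ of powers preserving positive semidefiniteness on $\mathbb{P}_G([0,\infty))$ is $[0,\infty)$ when $G$ has no $K_3$ and no induced $K_3^{(1)}=P_3$ of the relevant size, and otherwise equals $\N\cup[\rho-2,\infty)$ where $\rho$ is exactly the largest integer such that $K_\rho$ or $K_\rho^{(1)}$ occurs as a subgraph. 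By Theorem~\ref{thm: the two sets}, $\mathcal{H}_G=\mathcal{H}_G^{'}$ for every connected graph on $\geq 3$ vertices, so the same description transfers verbatim to $\mathcal{H}_G^{'}$. This handles the connected case immediately.

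Next I would reduce the general chordal case to the connected case. If $G$ is disconnected, a matrix $A\in\mathbb{P}_G^{'}([0,\infty))$ is block diagonal up to simultaneous permutation, with blocks indexed by the connected components $G_1,\dots,G_t$; Hadamard powers act blockwise, and a block-diagonal matrix is PD (resp.\ PSD) iff each block is. Hence $\mathcal{H}_G^{'}=\bigcap_i \mathcal{H}_{G_i}^{'}$, and since the quantity $r$ for $G$ is the maximum of the $r_i$ over components (a clique or near-clique lives inside a single component), the formula $\N\cup[r-2,\infty)$ for $G$ follows from those for the $G_i$ — noting that $\N\cup[r_i-2,\infty)\supseteq \N\cup[r-2,\infty)$ for each $i$, with equality achieved by the component attaining the maximum. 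One should also dispose of the degenerate cases: if $r\leq 2$ then $\N\cup[r-2,\infty)=\N\cup[0,\infty)=[0,\infty)$, which matches the convention, and a chordal graph on $\geq 3$ vertices with some component of $\geq 3$ vertices always contains $P_3=K_3^{(1)}$, so $r\geq 3$ whenever the bound is nontrivial; components with $1$ or $2$ vertices contribute $\mathcal{H}^{'}=[0,\infty)$ and do not constrain.

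The one genuine point needing care — and the step I expect to be the main obstacle — is verifying that for a \emph{chordal} graph the combinatorial quantity governing $\mathcal{H}_G^{'}$ in \cite{guillot2016critical} is exactly ``the largest $r$ with $K_r$ or $K_r^{(1)}$ a subgraph of $G$,'' i.e.\ that chordality lets one replace the possibly more delicate graph parameter in the general theorem by this clean clique-type invariant. Concretely I would invoke the structure theory of chordal graphs — a perfect elimination ordering, equivalently that $G$ is the intersection graph of subtrees of a tree, so that its maximal cliques are well understood — to argue that any subgraph of $G$ relevant to the critical-exponent computation is controlled by its maximal cliques, and that $K_r^{(1)}$ being a subgraph is governed by having a clique of size $r-1$ together with one extra vertex adjacent to all but one of it. This is precisely where \cite{guillot2016critical} presumably already does the work for chordal graphs in the PSD setting; I would cite that portion of their Theorem~1.4 directly and then merely append Theorem~\ref{thm: the two sets} to upgrade PSD to PD. If instead \cite{guillot2016critical} only states the general (not-necessarily-chordal) formula, I would add a short lemma: for chordal $G$, the parameter in their theorem equals $\max\{r: K_r \text{ or } K_r^{(1)}\subseteq G\}$, proved by induction along a perfect elimination ordering.

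To summarize the write-up order: (i) state \cite[Theorem~1.4]{guillot2016critical} in the form giving $\mathcal{H}_G=\N\cup[r-2,\infty)$ for connected chordal $G$; (ii) apply Theorem~\ref{thm: the two sets} to get $\mathcal{H}_G^{'}=\mathcal{H}_G=\N\cup[r-2,\infty)$ in the connected case; (iii) pass to arbitrary chordal $G$ via the block-diagonal decomposition over connected components, using that $r=\max_i r_i$ and $\bigcap_i\bigl(\N\cup[r_i-2,\infty)\bigr)=\N\cup[r-2,\infty)$; (iv) remark on the degenerate small cases so the stated formula is literally correct as written. The only nonroutine ingredient is (i)'s identification of the invariant, which rests on chordal structure and is the part I would attribute to \cite{guillot2016critical}.
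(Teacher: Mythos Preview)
Your approach is essentially the paper's: cite \cite[Theorem~1.4]{guillot2016critical} to obtain $\mathcal{H}_G=\N\cup[r-2,\infty)$ for chordal $G$ (that theorem already phrases the invariant exactly as ``largest $r$ with $K_r$ or $K_r^{(1)}$ a subgraph''), then apply Theorem~\ref{thm: the two sets} to conclude $\mathcal{H}_G^{'}=\mathcal{H}_G$. Your extra care with disconnected $G$ and small components is in fact more thorough than the paper's one-line justification, since Theorem~\ref{thm: the two sets} as stated assumes connectedness; but the argument is the same in spirit.
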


We now return to the analysis of powers preserving the positivity of the matrices $T$ and $P$, defined in Equation~(\ref{eqn: T and P}). Let $b_j=0$ for some $1 \leq j \leq (n-1)$. Then $T$ becomes a block diagonal matrix having two smaller diagonal blocks. Continuing this way with these smaller blocks and repeating the process, one can see that $T$ is a block diagonal matrix, where each diagonal block is a tridiagonal matrix with positive entries on its upper and lower diagonals. Moreover, every such block of $T$ is a PD matrix with positive entries on the main, upper and lower diagonals, if $T$ is PD. \par


To prove our next result, we will need the following theorems related to chain sequences.
\vspace{0.2cm}

\begin{theorem}\textup{\cite[Theorem 5.7]{chihara2011introduction}} \label{thm : comparison test}
 If $\{\mathfrak{a}_{k}\}_{k=1}^{n}$ is a chain sequence and $0< \mathfrak{c}_{k}\leq \mathfrak{a}_{k}$ for $k \geq 1,$ then $\{\mathfrak{c}_{k}\}_{k=1}^{n}$ is also a chain sequence.
\end{theorem}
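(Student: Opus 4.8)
The plan is to build a parameter sequence for $\{\mathfrak{c}_{k}\}$ directly from a parameter sequence for $\{\mathfrak{a}_{k}\}$ by an inductive recipe, starting from the \emph{smallest} admissible initial value. Fix a parameter sequence $\{\mathfrak{g}_{k}\}_{k\geq 0}$ witnessing that $\{\mathfrak{a}_{k}\}$ is a chain sequence, so $0\leq \mathfrak{g}_{0}<1$, $0<\mathfrak{g}_{k}<1$ for $k\geq 1$, and $\mathfrak{a}_{k}=(1-\mathfrak{g}_{k-1})\mathfrak{g}_{k}$. Set $\mathfrak{h}_{0}=0$ and, recursively for $1\leq k\leq n$, put $\mathfrak{h}_{k}=\mathfrak{c}_{k}/(1-\mathfrak{h}_{k-1})$, provided each denominator stays positive; the goal is to show this defines a legitimate parameter sequence for $\{\mathfrak{c}_{k}\}_{k=1}^{n}$.

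First I would verify, by induction on $k$, the two-sided bound $0\leq \mathfrak{h}_{k}\leq \mathfrak{g}_{k}<1$, which simultaneously guarantees the recursion is well defined. The base case $\mathfrak{h}_{0}=0\leq \mathfrak{g}_{0}$ is immediate. For the inductive step, assume $\mathfrak{h}_{k-1}\leq \mathfrak{g}_{k-1}<1$; then $1-\mathfrak{h}_{k-1}\geq 1-\mathfrak{g}_{k-1}>0$, so $\mathfrak{h}_{k}$ is defined and, since $\mathfrak{c}_{k}>0$, strictly positive, and
\[
\mathfrak{h}_{k}=\frac{\mathfrak{c}_{k}}{1-\mathfrak{h}_{k-1}}\leq \frac{\mathfrak{a}_{k}}{1-\mathfrak{h}_{k-1}}\leq \frac{\mathfrak{a}_{k}}{1-\mathfrak{g}_{k-1}}=\frac{(1-\mathfrak{g}_{k-1})\mathfrak{g}_{k}}{1-\mathfrak{g}_{k-1}}=\mathfrak{g}_{k}<1.
\]
Thus $\mathfrak{h}_{0}=0\in[0,1)$ and $0<\mathfrak{h}_{k}<1$ for $1\leq k\leq n$, while $\mathfrak{c}_{k}=(1-\mathfrak{h}_{k-1})\mathfrak{h}_{k}$ by construction; hence $\{\mathfrak{h}_{k}\}_{k\geq 0}$ is a parameter sequence for $\{\mathfrak{c}_{k}\}_{k=1}^{n}$, and that sequence is a chain sequence.

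The only point requiring care — and the main obstacle — is resisting the temptation to reuse $\{\mathfrak{g}_{k}\}$ itself as the parameter sequence for $\{\mathfrak{c}_{k}\}$: the inequality $\mathfrak{c}_{k}\leq \mathfrak{a}_{k}$ does not force $\mathfrak{c}_{k}=(1-\mathfrak{g}_{k-1})\mathfrak{g}_{k}$, so a genuinely new sequence must be constructed. Initializing at $\mathfrak{h}_{0}=0$ is what keeps every denominator $1-\mathfrak{h}_{k-1}$ as large as possible and so makes the comparison $\mathfrak{h}_{k}\leq \mathfrak{g}_{k}$ propagate; a different choice of $\mathfrak{h}_{0}$ could destroy positivity of $1-\mathfrak{h}_{k-1}$ further down the line. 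In the finite-length setting stated here there are no convergence subtleties, so the induction simply terminates at $k=n$ and nothing more is needed.
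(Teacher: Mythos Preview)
Your argument is correct: the recursion $\mathfrak{h}_0=0$, $\mathfrak{h}_k=\mathfrak{c}_k/(1-\mathfrak{h}_{k-1})$ is well defined at each step, the inductive bound $0\leq\mathfrak{h}_k\leq\mathfrak{g}_k<1$ goes through exactly as you wrote, and the identity $\mathfrak{c}_k=(1-\mathfrak{h}_{k-1})\mathfrak{h}_k$ is immediate from the definition. This is the standard construction of the minimal parameter sequence, and your remark that starting at $\mathfrak{h}_0=0$ is what makes the comparison propagate is the right intuition.

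As for comparing with the paper: the paper does not give its own proof of this statement. It is quoted verbatim as \cite[Theorem~5.7]{chihara2011introduction} and used as a black box in the proof of Theorem~\ref{Thm : sub main result}. So there is no in-paper argument to compare against; your proof simply supplies what the paper outsourced to Chihara, and it is in fact the same construction one finds there.
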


\begin{theorem}\textup{\cite[Theorem 3.2]{ismail1991discrete}}
 \label{thm : Wall and Wetzel}
 Let $a_i,b_j>0$ for $1 \leq i \leq n, 1\leq j \leq (n-1)$. Then $T$ is positive definite if and only if $\Big\{\frac{b_{j}^{2}}{a_{j}a_{j+1}}\Big\}_{j=1}^{n-1}$ is a chain sequence. \end{theorem}

We now prove our second result.\\

\hspace{-0.55cm}\textit{\textbf{Proof of Theorem~\ref{Thm : sub main result}}}. By Theorem~\ref{thm: the two sets}, giving the proof for the \textup{PD} case is sufficient. We first prove the `if part'. It is enough to prove our result for the matrix $T$, where $a_i,b_j>0$ for $1 \leq i \leq n, 1\leq j \leq (n-1)$.
By Theorem~\ref{thm : Wall and Wetzel}, $\Big\{\frac{b_{j}^{2}}{a_{j}a_{j+1}}\Big\}_{j=1}^{n-1}$ is a chain sequence. Let $r>1$. Since $T$ is PD, $0<\frac{b_{j}^{2}}{a_{j}a_{j+1}}< 1$, which gives $0 < \left(\frac{b_{j}^{2}}{a_{j}a_{j+1}}\right)^{r}< \frac{b_{j}^{2}}{a_{j}a_{j+1}}$. Thus by Theorem~\ref{thm : comparison test}, $\Big\{\frac{b_{j}^{2r}}{a_{j}^{r}a_{j+1}^{r}}\Big\}_{j=1}^{n-1}$ is also a chain sequence. Hence, by Theorem~\ref{thm : Wall and Wetzel}, the matrix $T^{\circ r}$ is PD. \\
Now we prove the `only if' part. Let $0<r<1$. Consider  the tridiagonal \textup{PD} matrix $$A(\epsilon) = \begin{bmatrix}
 1 & 1 & 0\\
 1 & (2+\epsilon) & 1\\
 0 & 1 & 1
\end{bmatrix},$$ where $\epsilon$ is any arbitrary positive number. For every $0<\epsilon < (2^{\frac{1}{r}}-2),$  $\det(A(\epsilon)^{\circ r}) = (2+\epsilon)^{r}-2 <0,$ so $A(\epsilon)^{\circ r}$ is not \textup{PD}. Hence we are done.\qed \\

A symmetric block diagonal matrix is PSD (PD) if and only if each block is PSD (PD). Let $A$ be any matrix of order $n$. For $\alpha = \{\alpha_{1}, \ldots, \alpha_{k}\} \subseteq \{1,2,\ldots, n\}$, where $\alpha_{1} < \alpha_{2} < \cdots < \alpha_{k}$, let $A[\alpha]$ denote the principal submatrix of $A$ obtained by picking rows and columns indexed by $\alpha$. $A$ is PD if and only if all its leading principal minors are positive. If $A$ is PD, then all its principal submatrices are PD. For distinct positive integers $1 \leq i_1,\ldots,i_n \leq n$, let  $\textup{Perm}(i_1,\ldots,i_n)$  denote the permutation matrix of order $n$, whose $k$th row is the $i_{k}$th row of the identity matrix of order $n$. If $A$ is a PD (PSD) matrix of order $n$, then $XAX^{*}$ is PD (PSD) for any nonsingular matrix $X$ of order $n$. \par 
Our third result is as given below.\\

\hspace{-0.55cm}\textit{\textbf{Proof of Theorem~\ref{Penta results}}}. 
 We first show the ‘if part’. Let $A_{l}^{*} = P[\beta]$ and $A_{m}^{**} = P[\gamma],$ where $\beta = \{1,3,\ldots, (2l-1)\}, \gamma = \{2,4,\ldots, 2m\}$ for $1 \leq l,m \leq k$ if $n=2k$ and $1\leq l \leq (k+1), 1 \leq m \leq k$ if $n=2k+1$. One can observe that the principal submatrices $A_{l}^{*}$ and $A_{m}^{**}$ of $P$ are tridiagonal with the upper and lower diagonal entries belonging to the set $\{y_{i}\}_{i=1}^{(n-2)},$ and the main diagonal entries belonging to the set $\{x_{i}\}_{i=1}^{n}$ as given below:

\begin{align*}
A_{l}^{*} = \begin{bmatrix}
x_{1} & y_{1} & \cdots & 0 & 0\\
y_{1} & x_{3} & y_3 & 0 & 0\\
\vdots & y_3 & \ddots & \ddots & \vdots\\
0 & 0 & \ddots & x_{2l-3} & y_{2l-3}\\
0 & 0 & \cdots & y_{2l-3} & x_{2l-1}
\end{bmatrix}_{l\times l}
\text{and}\,\,\,
A_{m}^{**} = \begin{bmatrix}
x_{2} & y_{2} & \cdots & 0 & 0\\
y_{2} & x_{4} & y_4 & 0 & 0\\
\vdots & y_4 & \ddots & \ddots & \vdots\\
0 & 0 & \ddots & x_{2m-2} & y_{2m-2}\\
0 & 0 & \cdots & y_{2m-2} & x_{2m}
\end{bmatrix}_{m\times m}.
\end{align*}

Also note that for every $r>0,$ $P^{\circ r}$ is congruent to the block matrix $M^{\circ r}$ via a permutation matrix $X$ of order $n$, {\it i.e.,} $M^{\circ r} = XP^{\circ r}X^{*}$ for $r>0$, where 
\begin{align} \label{eqn: proof of penta matrix}
X = 
\begin{cases}
\textup{Perm}(1,3,\ldots,(2k-1),2,4,\ldots,2k) & \textup{if}~ n = 2k, \\
\textup{Perm}(1,3,\ldots,(2k+1),2,4,\ldots,2k) & \textup{if}~ n = 2k+1
\end{cases} \,\,\text{and}\,\,
M = \begin{cases}
\begin{bmatrix}
 A_{k}^{*} & 0\\
 0 & A_{k}^{**}
\end{bmatrix} & \textup{if}~ n = 2k,\\\\
\begin{bmatrix}
 A_{k+1}^{*} & 0\\
 0 & A_{k}^{**}
\end{bmatrix} & \textup{if}~ n = 2k+1.
\end{cases}
\end{align}

We prove the required results for the case when $n$ is even (the case when $n$ is odd can be proved analogously). Let $r>1$ and $n = 2k$. If $P$ is PD (PSD), then because $M = XPX^{*}$, $M$ is PD (PSD). So $A_{k}^{*}$ and $A_{k}^{**}$ are PD (PSD) matrices. Hence, $(A_{k}^{*})^{\circ r}$ and $(A_{k}^{**})^{\circ r}$ are PD (PSD), which gives $M^{\circ r}$ is PD (PSD). But then $P^{\circ r} = X^{-1}M^{\circ r}X$ is PD (PSD).\par
For the `only if' part of the PSD case, the following example is sufficient. Let
\begin{align*}
\mathcal{P}=\begin{bmatrix}
  1&0&1&0&0\\
  0&2&0&1&0\\
  1&0&2&0&1\\
  0&1&0&1&0\\
  0&0&1&0&1
   \end{bmatrix}.
\end{align*}
The matrix $\mathcal{P}$ is PSD, but $\det(\mathcal{P}^{\circ r})= 2 - 3(2^r) + 4^r<0$ for any $0<r<1$. Hence, $\mathcal{P}^{\circ r}$ is not PSD for any $0<r<1$. Since $\mathcal{P}$ is the limit of a sequence of PD  pentadiagonal matrices (in the form given in Equation~(\ref{eqn: T and P})), the `only if' part of the PD case is also done. This completes the proof.
\qed \\

Each principal submatrix of $A$ is ID if $A$ is ID. Every PSD matrix of order $2$ is ID. We now discuss the infinite divisibility of the matrices $T$ and $P$. 

\begin{lem} \label{lem:3TID}
Let $A =\begin{bmatrix}
 a_{1}&b_{1}&0\\
b_{1}&a_{2}&b_{2}\\
0&b_{2}&a_{3}
\end{bmatrix}$ be a \textup{PSD} matrix of order $3$. Then $A$ is \textup{ID} if and only if $b_{1}b_{2}=0$.
\end{lem}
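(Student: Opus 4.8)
The plan is to prove both directions of the equivalence for the $3\times 3$ tridiagonal PSD matrix $A$, relying on the fact that every principal submatrix of an ID matrix is ID and that a $2\times 2$ PSD matrix is always ID.

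For the ``only if'' direction, I would argue by contradiction: suppose $A$ is ID but $b_1b_2\neq 0$, so $b_1>0$ and $b_2>0$. If either $a_1$, $a_2$, or $a_3$ were zero, then PSD-ness of the corresponding $2\times 2$ principal minor would already force the adjacent off-diagonal entry to be zero, contradicting $b_1,b_2>0$; so all $a_i>0$. Now consider $\det(A^{\circ r}) = a_1^r a_2^r a_3^r - a_1^r b_2^{2r} - a_3^r b_1^{2r}$. Since $A$ is ID, this must be $\geq 0$ for every $r>0$, i.e. $1 \geq \frac{b_1^{2r}}{a_1^r a_2^r} + \frac{b_2^{2r}}{a_2^r a_3^r}$ for all $r>0$. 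Write $u = \frac{b_1^2}{a_1 a_2}$ and $v = \frac{b_2^2}{a_2 a_3}$; both are positive, and PSD-ness of $A$ at $r=1$ gives $u+v\leq 1$, hence $u<1$ and $v<1$. But then $u^r \to 1$ and $v^r\to 1$ as $r\to 0^+$, so $u^r+v^r \to 2 > 1$, a contradiction. Hence $b_1b_2=0$.

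For the ``if'' direction, suppose $A$ is PSD and $b_1b_2 = 0$; by symmetry assume $b_2 = 0$ (the case $b_1=0$ is identical after permuting indices via $\mathrm{Perm}(3,2,1)$, which preserves ID-ness as conjugation by a permutation matrix). Then $A$ is block diagonal: $A = \begin{bmatrix} A' & 0 \\ 0 & a_3 \end{bmatrix}$ where $A' = \begin{bmatrix} a_1 & b_1 \\ b_1 & a_2\end{bmatrix}$ is a $2\times 2$ PSD block and $a_3\geq 0$. For every $r>0$, $A^{\circ r}$ is again block diagonal with blocks $(A')^{\circ r}$ and $a_3^r$; since $A'$ is PSD of order $2$ it is ID, so $(A')^{\circ r}$ is PSD, and $a_3^r\geq 0$. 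Thus $A^{\circ r}$ is PSD for all $r>0$, i.e. $A$ is ID.

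The main obstacle is essentially the limiting argument $r\to 0^+$ in the ``only if'' part, which is what rules out two simultaneously nonzero off-diagonal entries; one must first dispose of the degenerate cases where some $a_i=0$ so that the quantities $\frac{b_i^2}{a_i a_{i+1}}$ are well-defined. Everything else is routine block-diagonal bookkeeping together with the cited facts about principal submatrices and $2\times2$ matrices.
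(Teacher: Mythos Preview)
Your proof is correct and follows essentially the same approach as the paper. For the ``only if'' direction both arguments exploit the limit $r\to 0^{+}$: the paper passes to the limit matrix $C=\lim_{r\to 0^+}A^{\circ r}=\begin{bmatrix}1&1&0\\1&1&1\\0&1&1\end{bmatrix}$ and observes it is not PSD, while you track the (normalized) determinant $u^r+v^r\to 2>1$; these are the same idea in slightly different packaging. For the ``if'' direction your block-diagonal argument is a touch more explicit than the paper's one-line determinant check, but again the content is the same.
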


\begin{proof}
 Let $A$ be ID and $C = \lim_{r \rightarrow 0^{+}}A^{\circ r}$. Since the matrix $C$ is the limit of a sequence of PSD matrices, it is PSD. Let $b_1$ and $b_2$ be positive. Since $A$ is PSD, $a_1,a_2$ and $a_3$ are positive. This gives that $C = \begin{bmatrix}
 1 & 1 & 0\\
 1 & 1 & 1\\
 0 & 1 & 1 
\end{bmatrix}$. Hence, $C$ is not PSD, which is not true. So $b_1b_2=0$. Conversely, if $b_1b_2 = 0,$ then $\det(A^{\circ r}) \geq 0$ for $r>0$. Hence, $A$ is ID.
\end{proof}
In our final result, we give a characterization for the matrix $T$ in Equation (\ref{eqn: T and P}) to be infinitely divisible.\\

\hspace{-0.55 cm}\textit{\textbf{Proof of Theorem~\ref{TID:thm1}}}.
 Let $T$ be ID. Let $b_{k}b_{k+1}>0,$ for some $1\leq k \leq (n-2),$ then by Lemma~\ref{lem:3TID}, the principal submatrix $$\begin{bmatrix}
  a_k & b_k & 0\\ 
  b_{k} & a_{k+1} & b_{k+1}\\ 
  0 & b_{k+1} & a_{k+2}                                 \end{bmatrix}$$ of $T$ is not ID. Thus, $T$ is not ID, which contradicts the hypothesis. Hence, $b_ib_{i+1}=0$ for every $i\in \{1,2,\ldots,n-2\}$. Conversely, if $b_ib_{i+1}=0$ for every $i\in \{1,2,\ldots,n-2\},$ then $A$ becomes a block diagonal matrix, where each non-zero diagonal block is a PSD matrix of order $1$ or $2$. Hence, $T$ is ID.  This completes the proof. \qed
\begin{cor}
  The matrix $P$ in (\ref{eqn: T and P}) is \textup{ID} if and only if $P$ is \textup{PSD} and the sequences $\{y_{2i}\}_{i \geq 1}$ and $\{y_{2i-1}\}_{i\geq 1}$ have no two consecutive positive entries. Hence, for $n=3$ and $4$, the matrix $P$ is \textup{ID} if and only if $P$ is \textup{PSD}.
\end{cor}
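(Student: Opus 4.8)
The plan is to reduce the question to the tridiagonal case already settled in Theorem~\ref{TID:thm1}, using the permutation congruence from the proof of Theorem~\ref{Penta results}. Recall that there we produced a permutation matrix $X$ (independent of $r$) with $M^{\circ r} = XP^{\circ r}X^{*}$ for every $r>0$, where $M$ is block diagonal with blocks $A_{k}^{*}$ (or $A_{k+1}^{*}$) and $A_{k}^{**}$. Since congruence by a fixed permutation matrix preserves positive semidefiniteness, $P^{\circ r}$ is PSD for all $r>0$ if and only if $M^{\circ r}$ is PSD for all $r>0$; that is, $P$ is ID if and only if $M$ is ID. As a symmetric block diagonal matrix is PSD exactly when each block is, $M$ is ID if and only if both tridiagonal blocks $A_{\bullet}^{*}$ and $A_{k}^{**}$ are ID, and $M$ (equivalently, both blocks) is PSD exactly when $P$ is PSD.

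Next I would apply Theorem~\ref{TID:thm1} to each block. From the explicit forms displayed in the proof of Theorem~\ref{Penta results}, the off-diagonal entries of $A_{\bullet}^{*}$ are the odd-indexed entries $y_{1},y_{3},y_{5},\ldots$, so Theorem~\ref{TID:thm1} gives that this block is ID if and only if it is PSD and the product of every pair of consecutive off-diagonal entries vanishes, i.e.\ $y_{2i-1}y_{2i+1}=0$ for all admissible $i$ --- equivalently, the sequence $\{y_{2i-1}\}_{i\geq1}$ has no two consecutive positive entries. Likewise the off-diagonal entries of $A_{k}^{**}$ are $y_{2},y_{4},y_{6},\ldots$, so that block is ID if and only if it is PSD and $\{y_{2i}\}_{i\geq1}$ has no two consecutive positive entries. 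Combining these with the reduction above yields the stated characterization; the odd case $n=2k+1$ is handled identically, only the size of the first block changing. For the concluding sentence, when $n=3$ we have only $y_{1}$, so $\{y_{2i-1}\}=(y_{1})$ and $\{y_{2i}\}$ is empty, and when $n=4$ we have $y_{1},y_{2}$, so $\{y_{2i-1}\}=(y_{1})$ and $\{y_{2i}\}=(y_{2})$; in either case neither sequence contains two entries, so the condition is vacuous and ID is equivalent to PSD.

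I do not expect a serious obstacle here; the proof is essentially bookkeeping layered on top of Theorems~\ref{Penta results} and~\ref{TID:thm1}. The one point that needs care is the index translation --- verifying that the off-diagonals of $A_{\bullet}^{*}$ and $A_{k}^{**}$ really are the consecutive odd- and even-indexed $y$'s, which one reads directly off the matrices in the proof of Theorem~\ref{Penta results} --- together with the minor edge case that a block may have order $1$ or $2$ (so Theorem~\ref{TID:thm1}, stated for order $n\geq3$, does not literally apply); but such a block is automatically ID once it is PSD, which is exactly consistent with the "no two consecutive positive entries" condition being vacuous there.
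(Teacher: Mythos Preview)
Your proposal is correct and follows exactly the paper's approach: invoke the permutation congruence $M^{\circ r}=XP^{\circ r}X^{*}$ from the proof of Theorem~\ref{Penta results} to reduce infinite divisibility of $P$ to that of the block-diagonal tridiagonal matrix $M$, then apply Theorem~\ref{TID:thm1}. The paper's proof is a two-line sketch of precisely this reduction, and your write-up simply fills in the bookkeeping (index translation and the small-block edge cases) that the paper leaves implicit.
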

  
  \begin{proof}
    From Equation~(\ref{eqn: proof of penta matrix}), we have $M^{\circ r} = XP^{\circ r}X^{*}$ for every $r>0$. So $P$ is ID if and only if $M$ is ID. Hence the result holds by Theorem~\ref{TID:thm1}.
  \end{proof}
 
We end with a few related remarks.
  

\begin{remark}\label{rem: block diagonal representation}
   From Theorem~\ref{TID:thm1}, the matrix $T$ is \textup{ID} if and only if $T$ is a block diagonal matrix, where each non-zero diagonal block is a \textup{PSD} matrix of order $1$ or $2$.
  \end{remark}
  
  In general, ID matrices are not closed under addition and multiplication. For example,  let $X = [x_{i}x_{j}],$ where $x_{1}, \ldots, x_{n}$ are distinct positive real numbers and $J_{n}$ be the matrix of order $n$ with each of its entries equals to $1,$ then $X$ and $J_{n}$ are both ID, but their sum is not ID (see \cite[Theorem $1.1$]{jain2017hadamard}). The Cauchy matrix $\mathcal{C} = [c_{ij}] =  \left[\frac{1}{i+j}\right], 1 \leq i,j \leq 3$ is ID (see \cite{bhatia2006infinitely}), but its square $\mathcal{C}^{2}$ is not ID because $\det(\mathcal{C}^{2})^{\circ \frac{1}{4}} = \det \left[\big(\frac{1}{i+j}\big)^{\frac{1}{4}}\right] <0$. We say that two block diagonal matrices are of the same structure if their corresponding blocks are square matrices of the same order. The set of block diagonal matrices of the same structure is closed under addition, multiplication and multiplication by a nonnegative scalar. Hence, by Remark~\ref{rem: block diagonal representation}, we get the following.

\begin{remark}
Let $m \geq 1$ 
and $a_k \geq 0$ for all $0\leq k \leq m$. Let $T^{0}=T^{\circ 0}=I$. If the tridiagonal matrix $T$ (as in Equation~(\ref{eqn: T and P})) is \textup{ID}, then the matrices $f(T) =\sum_{k=0}^{m} a_k T^{k}$ and $f[T]=\sum_{k=0}^{m} a_k T^{\circ k}$ are \textup{ID}.
\end{remark}

\textbf{Acknowledgement:} \textit{It is a pleasure to express our heartfelt gratitude to Apoorva Khare for his insightful remarks that helped to enhance the article. We are also appreciative to the anonymous referees for their thorough review and valuable remarks.}

\end{document}